\newtheorem{theorem}{Theorem}
\newtheorem{lemma}{Lemma}
\newtheorem{proposition}{Proposition}
\def\leq{\leqslant} \def\geq{\geqslant}
\begin{document}

\title{$L^1$ means of exponential sums with multiplicative coefficients. I.}
\author{Mayank Pandey}
\address{Department of Mathematics, Princeton University, Princeton, NJ 08540, USA}
\email{mayankpandey9973@gmail.com}

\author{Maksym Radziwill}
\address{UT Austin, Department of Mathematics, PMA 8.100, 2515 Speedway, Stop C1200, Austin, TX 78712}
\email{maksym.radziwill@gmail.com}

\begin{abstract}
  We show that the $L^1$ norm of an exponential sum of length $X$ and with coefficients equal to the Liouville or M\"{o}bius function is at least $\gg_{\varepsilon} X^{1/4 - \varepsilon}$ for any given $\varepsilon$. 

  For the Liouville function this improves on the lower bound $\gg X^{c/\log\log X}$ due to Balog and Perelli (1998). For the M\"{o}bius function this improves the lower bound $\gg X^{1/6}$ due to Balog and Ruzsa (2001). The large discrepancy between these lower bounds is due to the method employed by Balog and Ruzsa, as it crucially relies on the vanishing of $\mu(n)$. Instead our proof puts the two cases on an equal footing by exploiting the connection of these coefficients with zeros of Dirichlet $L$-functions. In the second paper in this series we will obtain a lower bound $\gg X^{\delta}$ for some small $\delta$ but for general (non-pretentious) multiplicative functions. 
  
  %The discrepancy between the last two lower bounds was due to the fact that the proof of Balog and Ruzsa relies crucially on the vanishing of M\"{o}bius on non-squarefree integers and is thus not applicable to the Liouville function. Instead, our proof relies on the connection of the Liouville and M\"{o}bius function with the Riemann $\zeta$-function, thus putting these two cases on an equal footing. 
\end{abstract}

\maketitle

\section{Introduction}

The behavior of $L^1$ norms of exponential sums, i.e.
\begin{equation} \label{eq:norm}
\int_{0}^{1} \Big | \sum_{n \leq X} a(n) e(n \alpha) \Big | d \alpha \ , \ \sum_{n \leq X} |a(n)|^2 \asymp X
\end{equation}
is a classical topic in harmonic analysis. It follows from works of Konyagin~\cite{littlewood2} and McGehee-Pigno-Smith~\cite{littlewood}, resolving a conjecture of Littlewood, that~\eqref{eq:norm} is always at least 
$\log X$ if $a(n)$ takes on a fixed set of values. This lower bound is attained for $a(n)$ the indicator function of sequences of an additive nature, for example the indicator function of an arithmetic progression. 

For multiplicative sequences such as $\mu(n)$ or $\lambda(n)$, we expect the lower bound to be closer to $\sqrt{X}$ but known results fall short of this expectation. For $a(n) = \lambda(n)$ and $a(n) = \mu(n)$ a lower bound of respectively $X^{c/\log\log X}$ and $X^{1/6}$ is due to respectively Balog-Perelli~\cite{BalogPerelli} and Balog-Ruzsa~\cite{BalogRuzsa2}. The large discrepancy between these bounds arises because the proof of Balog-Ruzsa crucially uses that $\mu(p^2 m) = 0$ for integers $m$ and a sequence of primes $p$, while $\lambda(n) \neq 0$ for all integers $n$. 

Our main aim in this paper is to put both of these cases on the same footing and obtain improved lower bounds in both cases. Since both cases are similar we will present a full proof only for $a(n) = \lambda(n)$, the hitherto harder case. 

\begin{theorem}\label{thm:main}
  For every $\varepsilon > 0$, 
  \begin{equation} \label{eq:mainnorm}
  \int_{0}^{1} \Big | \sum_{n \leq X} \lambda(n) e(n \alpha) \Big | d \alpha \gg_{\varepsilon} X^{1/4 - \varepsilon}.
  \end{equation}
  \end{theorem}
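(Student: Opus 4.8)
The strategy I would pursue exploits the connection between $\lambda(n)$ and the zeros of $\zeta(s)$, as the abstract advertises. The starting point is the explicit-formula heuristic: for $\mathrm{Re}(s) > 1$ one has $\sum_n \lambda(n) n^{-s} = \zeta(2s)/\zeta(s)$, so the sum $\sum_{n \le X} \lambda(n) e(n\alpha)$ should, near rationals $\alpha = a/q$, feel the poles of $\zeta(s)^{-1}$, i.e.\ the nontrivial zeros $\rho$ of $\zeta$. Concretely, I expect to show that for $\alpha$ in a minor-type neighborhood of a rational with small denominator, the exponential sum $S(\alpha) = \sum_{n \le X} \lambda(n) e(n\alpha)$ has a main term governed by a sum over low-lying zeros $\rho = 1/2 + i\gamma$ of terms of roughly size $X^{\rho}/\rho$ times an oscillating factor in $\alpha$. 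The $L^1$ norm is then bounded below by integrating $|S(\alpha)|$ over these neighborhoods and extracting the contribution of a single well-separated zero, or of $\sqrt{X}$-worth of zeros with controlled cancellation.

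More precisely, the key steps, in order, would be: \emph{(i)} Use a contour-integration / Perron argument to write $\sum_{n \le X} \lambda(n) \Lambda$-type weights in terms of zeros; since we want the sharp exponent we should work with a smoothed sum and with a truncated explicit formula, keeping zeros $\gamma$ with $|\gamma| \le T$ for a suitable $T$ (a small power of $X$). \emph{(ii)} Transfer this to the exponential sum by splitting $n \le X$ into arithmetic progressions $n \equiv a \pmod q$ and using the Fourier expansion $e(na/q) = \frac{1}{\phi(q)}\sum_\chi (\cdots)$, so that $\lambda$ twisted by a Dirichlet character $\chi$ appears, and hence zeros of $L(s,\chi)$ (this is where the ``equal footing'' with $\mu$ comes from — it is purely about $L$-functions, not about vanishing). \emph{(iii)} Bound the $L^1$ norm from below by a carefully chosen dual: pick a test function (a short sum of $e(-n\alpha)$-type kernels, e.g.\ a Fej\'er-like kernel concentrated on $[0, X^{1/2}]$ or near well-chosen rationals) and compute the pairing against $S(\alpha)$, getting a main term of size $X^{1/4-\varepsilon}$ from the zero sum and absorbing error terms using zero-density estimates (Huxley / Ingham) to control how many zeros can be off the critical line or how $\sum_\rho X^{\rho}$ can be large.

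\textbf{The main obstacle.} I expect the crux to be step \emph{(iii)}: extracting a clean lower bound of order $X^{1/4-\varepsilon}$ from the zero sum $\sum_{|\gamma| \le T} X^{1/2+i\gamma}/(1/2+i\gamma)$ (and its character-twisted analogues) without the oscillation in $\gamma$ and in $\alpha$ conspiring to produce cancellation that kills the bound. The honest $L^1$ lower bound only sees $|S(\alpha)|$, so one cannot simply integrate a signed main term; the right move is probably to pair $S$ against a dual function $\Phi$ with $\|\Phi\|_\infty \le 1$ for which $\int S \bar\Phi$ is provably large, and the design of $\Phi$ — likely built from the same zeros, or from a short Dirichlet polynomial $\sum_{p \le Y} \lambda(p) e(-p\alpha)$ mimicking the sieve structure — together with showing the diagonal term dominates, is the delicate part. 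Controlling the off-diagonal requires either a second-moment (Parseval) input bounding $\int |S(\alpha)|^2$-against-$\Phi$ type quantities, or an appeal to the pair correlation / zero-spacing of $\zeta$ to guarantee that at least one zero contributes without cancellation; the factor $X^{1/4}$ rather than $X^{1/2}$ strongly suggests a Cauchy--Schwarz step is lossy by a square root, so the bound is ``$\sqrt{\text{truth}}$'' and the real content is making even that square-root survive the interaction between the additive structure (rationals $a/q$) and the multiplicative structure (zeros of $L$-functions).
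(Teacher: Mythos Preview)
Your theme --- zeros of $L$-functions --- is right, but the mechanism you sketch is missing the two decisive ideas, and the obstacle you flag (cancellation among many zeros) is not one the paper ever meets. The paper uses exactly \emph{one} zero $\rho=\beta+i\gamma$ of $L(s,\chi)$; there is no sum over zeros, no zero-density input, no pair correlation. The device that isolates a single zero is a Pintz-type Mellin kernel $w_{A,\rho,\chi}$ whose transform is essentially $(s+i\gamma-1)L(s+i\gamma,\chi)/(s-\beta)$ times a damping factor: when integrated against $\sum_n \lambda(n)\chi(n)n^{-i\gamma}\phi(n/y)$, the factor $L(s+i\gamma,\chi)$ in the numerator cancels the $L(s+i\gamma,\chi)$ in the denominator of the Dirichlet series for $\lambda\chi$, leaving a single simple pole at $s=\beta$ and a clean main term $Y^\beta$ with power-saving error. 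This replaces your step~(i) entirely and dissolves the cancellation worry in your step~(iii).

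The $L^1$ lower bound is not obtained by pairing with a bounded dual $\Phi$ but via Young's inequality: one convolves $S$ with the exponential sum having coefficients $\chi(n)n^{-it}\phi(n/y)$, whose $L^1$ norm is $O_{t,q}(1)$ by Poisson summation, so $\|S\|_1 \gg_{\chi,t} \|\text{twisted sum}\|_1$. The twisted sum is then localized on disjoint arcs near $b/p$ for primes $p\le\sqrt{X}$; summing over $b$ coprime to $p$ produces the Ramanujan weight $p\mathbf{1}_{p\mid n}-1$, and complete multiplicativity of $\lambda$ reduces this to two shifted copies of the untwisted sum, each evaluated by the Pintz kernel. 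One gets a contribution $\asymp X^\beta p^{1-\beta}$ per prime, and the exponent $1/4$ is $\beta/2$ at $\beta=1/2$, coming from $\tfrac{1}{X}\sum_{p\le X^{1/2}}X^\beta p^{1-\beta}\asymp X^{\beta/2}$ --- not from a Cauchy--Schwarz loss. The Ramanujan-sum localization and the single-zero Pintz kernel are the two concrete ingredients absent from your plan; without them your step~(iii) remains, as you yourself note, an unresolved crux.
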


The main arithmetic input in our proof is the observation that for any primitive character $\chi$, the Liouville function resonates with $\chi(n) n^{-i \gamma}$ provided that $\beta + i \gamma$ is a zero of $L(s, \chi)$. Ultimately our lower bound comes from the fact that there exists a Dirichlet $L$-function with a zero with real part $\geq \tfrac 12$, this readily follows for each $L(s, \chi)$ from the functional equation and the explicit formula\footnote{Indeed if $L(s, \chi)$ had no zeros in the critical strip then the explicit formula would imply that the sum of $\chi(p)$ with $p \leq x$ is a smooth function of $x$, a contradiction. Once a zero exists the functional equation implies that there is a zero with real part $\geq \tfrac 12$.}.

We now state this more precise version of Theorem~\ref{thm:main}.

\begin{theorem}\label{thm:extended}
  Let $\chi$ be a primitive character.
  Let $\rho_{\chi} = \beta_{\chi} + i \gamma_{\chi}$ denote a zero of $L(s, \chi)$. 
  Then, for every $\varepsilon > 0$,
  \[
      \int_{0}^{1} \Big | \sum_{n \leq X} \lambda(n) e(n \alpha) \Big | d \alpha \gg_{\rho, \varepsilon, \chi} X^{\beta_{\chi} / 2 - \varepsilon}. 
  \]
\end{theorem}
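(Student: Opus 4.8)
The plan is to exploit the "resonance" between $\lambda(n)$ and $\chi(n) n^{-i\gamma_\chi}$ by testing the exponential sum $S(\alpha) = \sum_{n \le X} \lambda(n) e(n\alpha)$ against a suitable dual object built from the character $\chi$ and the zero $\rho_\chi$. Since we only have access to the $L^1$ norm $\|S\|_1$, we need a function $\Phi(\alpha)$ with $\|\Phi\|_\infty \ll X^\varepsilon$ (or at worst $X^{o(1)}$) such that $\int_0^1 S(\alpha)\overline{\Phi(\alpha)}\,d\alpha$ is large, of size $\gg X^{\beta_\chi/2 - \varepsilon}$; then $\|S\|_1 \ge \|\Phi\|_\infty^{-1} |\int S\overline\Phi| \gg X^{\beta_\chi/2 - \varepsilon}$. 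The natural choice for $\Phi$ is an exponential sum whose coefficients encode $\overline{\chi(n)} n^{i\gamma_\chi}$ along a sparse set of arguments $\alpha$ close to rationals $a/q$ with $q \mid$ (a power of) the conductor — i.e. a Fejér-type or Dirichlet-type kernel twisted by the character. The key identity to engineer is that the "diagonal" contribution, where $n$ ranges over integers with $\lambda(n)$ correlating with $\chi(n) n^{-i\gamma_\chi}$, produces a main term of size $\asymp \sum_{n \le X} 1 \cdot (\text{something}) \asymp X^{\beta_\chi}$ before the $L^2 \to L^1$ loss of a square root, hence $X^{\beta_\chi/2}$.

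More concretely, I would set up a bilinear/sieve identity expressing $\lambda(n)$ via its Dirichlet series: the fact that $\beta_\chi + i\gamma_\chi$ is a zero of $L(s,\chi)$ means $\sum_n \chi(n)\Lambda(n) n^{-\rho_\chi}$ has a pole structure (from $-L'/L(s,\chi)$) precisely at $s = \rho_\chi$, and similarly $\sum_n \lambda(n) n^{-s}= \zeta(2s)/\zeta(s)$. I expect the argument to proceed by (i) choosing a smooth majorant/minorant and a Dirichlet polynomial $\Phi(\alpha) = \sum_{m \le Y} \overline{c(m)} e(m\alpha)$ with $c(m)$ supported on $m$ of the shape $qk$ (or detecting $\alpha$ near $a/q$) and $c(m) \approx \chi^*(m) m^{i\gamma_\chi}$ for an appropriate induced multiplicative weight, with $Y$ a small power of $X$; (ii) expanding $\int_0^1 S\overline\Phi = \sum_{n} \lambda(n) c(n) \mathbf{1}_{n \le X}$ and showing via multiplicativity and the zero condition that this sum is $\gg X^{\beta_\chi - \varepsilon}$; (iii) bounding $\|\Phi\|_\infty \ll X^{o(1)}$ using the sparsity of the support together with cancellation in character sums; and (iv) optimizing to convert the $X^{\beta_\chi}$ into $X^{\beta_\chi/2 - \varepsilon}$ — the square-root loss being exactly the price of passing from an $L^2$-type lower bound to an $L^1$ one, implemented by splitting $[0,1]$ into $\asymp q$ minor arcs and using the pigeonhole principle or a Hölder argument on the measure of the set where $|S|$ is large.

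**Main obstacle.** The hard part will be step (ii)–(iii) simultaneously: making the correlation sum $\sum_{n \le X}\lambda(n) c(n)$ genuinely as large as $X^{\beta_\chi}$ while keeping $\|\Phi\|_\infty$ under control. The tension is that a long test polynomial gives a bigger main term but risks a large sup-norm, and the resonance with a zero $\rho_\chi$ of positive real part only manifests after summing over a range long enough to "see" the zero, which is roughly $n$ up to the conductor to a large power — but $\lambda$ is not exactly $\chi(n)n^{-i\gamma_\chi}$, so one must pass through $\Lambda(n)$ (for which the $L$-function identity is clean) via the elementary identity $\lambda = 1 * (\text{something involving } \mu, \Lambda)$ or a Heath-Brown / Linnik-type combinatorial decomposition, and then reassemble, controlling the error terms. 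I anticipate the sup-norm bound for $\Phi$ will require either a Vinogradov-type bound for twisted exponential sums or an appeal to the Pólya–Vinogradov / Burgess inequality for incomplete character sums, and the final optimization of the lengths $Y$ versus $X$ is where the exponent $\beta_\chi/2 - \varepsilon$ (rather than the hoped-for $\beta_\chi/2$) emerges.
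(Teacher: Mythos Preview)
Your plan has two concrete gaps that the paper's argument is specifically designed to avoid.

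First, the $L^1\times L^\infty$ duality $\|S\|_1\ge |\int S\overline\Phi|/\|\Phi\|_\infty$ cannot succeed with any test polynomial of the shape $\Phi(\alpha)=\sum_{m\le Y}\overline{\chi(m)}m^{i\gamma}\phi(m/Y)e(m\alpha)$: at $\alpha=a/q$ the Gauss sum gives $|\Phi(a/q)|\asymp Y/\sqrt q$, so $\|\Phi\|_\infty\asymp Y$, not $X^\varepsilon$, and shrinking $Y$ kills the correlation $\sum_{n\le Y}\lambda(n)\chi(n)n^{-i\gamma}$ at least as fast. The paper sidesteps this by using $L^1\times L^1$ instead: by Young's inequality,
\[
\int_0^1\Big|\sum_n \lambda(n)\chi(n)n^{-i\gamma}\phi\Big(\frac{n}{y}\Big)e(n\alpha)\Big|\,d\alpha
\le \|S\|_1\cdot \int_0^1\Big|\sum_n \chi(n)n^{-i\gamma}\phi\Big(\frac{n}{y}\Big)e(n\alpha)\Big|\,d\alpha,
\]
and a Poisson summation shows the second factor is $O_{\chi,\gamma}(1)$ uniformly in $y$. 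Second, even the correlation $\sum_{n\le X}\lambda(n)\chi(n)n^{-i\gamma}$ is \emph{not} obviously $\gg X^{\beta}$: since $\sum_n\lambda(n)\chi(n)n^{-s}$ has $1/L(s,\chi)$ as a factor, the contour shift produces a sum over \emph{all} zeros $\rho'$ of $L(s,\chi)$, with possible cancellation. The paper isolates the single zero $\rho$ by integrating against a Pintz-type kernel whose Mellin transform carries a factor $L(s+i\gamma,\chi)/(s-\beta)$, cancelling the $1/L$ and leaving a clean simple pole at $s=\beta$.

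Finally, the exponent $\beta/2$ does not arise from an ``$L^2\to L^1$ square-root loss'' as you suggest. After the Young step, the paper localises the twisted sum near \emph{all} major arcs $b/p$ with $p$ prime, $p\le\sqrt X$; the identity $\sum_{(b,p)=1}e(nb/p)=p\mathbf{1}_{p\mid n}-1$ combined with complete multiplicativity of $\lambda$ turns each such $p$ into a contribution $\asymp X^{\beta}p^{1-\beta}/X$ after the kernel integration. Summing over $p\le X^{1/2-\varepsilon}$ gives
\[
X^{\beta-1}\sum_{p\le X^{1/2-\varepsilon}}p^{1-\beta}\asymp X^{\beta-1}\cdot X^{(1/2-\varepsilon)(2-\beta)}\asymp X^{\beta/2-\varepsilon}.
\]
So the halving of $\beta$ comes from the constraint $p\le\sqrt X$ (needed for disjointness of the Farey arcs), not from any H\"older interpolation.
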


%With a small additional effort Theorem~\ref{thm:extended} can be extended to also cover the case of the trivial character $\chi_0 \equiv 1$, i.e $L(s, \chi_0) = \zeta(s)$. 
Theorem~\ref{thm:extended} has some amusing implications. First, if Siegel zeros exist (in the very weak sense that $\beta_{\chi} \rightarrow 1$ for a subsequence of characters $\chi$), then the lower bound in Theorem~\ref{thm:main} can be improved to $X^{1/2 - \varepsilon}$. Second, an unrealistic (but not currently ruled out!) upper bound for \eqref{eq:mainnorm} of the form $X^{1/2 - \delta}$ with $\delta > 0$ implies a zero-free strip for every Dirichlet $L$-function. As a final consequence of Theorem~\ref{thm:extended}, if one aims to just slightly improve Theorem~\ref{thm:main}, one can assume without loss of generality the Riemann Hypothesis for every $L(s, \chi)$. 

Currently, even conditionally on the Generalized Riemann Hypothesis (GRH) we do not know how to improve Theorem \ref{thm:main}. We describe below a simple argument that recovers Theorem \ref{thm:main} conditionally on GRH. It follows from the work of Baker and Harman \cite{BakerHarman} that for all $\varepsilon > 0$, 
%We remark that under the Generalized Riemann Hypothesis, by approximating $\alpha$ by a rational and opening into characters (this was executed for the M\"obius function where the situation is the same by 
%Baker and Harman~\cite{BakerHarman}), one may show% Baker and Harman~\cite{BakerHarman} showed that uniformly in $\alpha$, we have
\begin{equation} \label{eq:baker}
  |S(\alpha; X)| \ll_{\varepsilon} X^{3/4 + \varepsilon} \ , \ S(\alpha; X) := \sum_{n \leq X} \lambda(n) e(n \alpha). 
\end{equation}
We then notice that
$$
X = \int_{0}^{1} |S(\alpha; X)|^2 d \alpha \ll X^{3/4 + \varepsilon} \int_{0}^{1} |S(\alpha; X)| d \alpha,
$$
and thus Theorem \ref{thm:main} follows conditionally on GRH. 
Improving Baker and Harman's bound \eqref{eq:baker} presents significant challenges. Any method based on Type-II sums is a no-go: when the bilinear form factors into two terms of equal length we can save at most $X^{1/4}$ over the trivial bound. Thus we need to exploit in that case cancellations in a sum over low-lying zeros of different $L$-functions. This is however currently way out of reach. 

Before proceeding to the proof we give a quick survey of related results. When $a(n) = \Lambda(n)$, Vaughan~\cite{Vaughan} shows that \eqref{eq:norm} is at least $\gg\sqrt{X}$ and at most $\ll\sqrt{X \log X}$. The lower bound is non-trivial and comes from the fact that the behavior of the exponential sum near the major arcs can be understood; one can hope for similar approaches to work whenever the sequence $a(n)$ is non-negative.  For sequences $a(n)$ such as the divisor function or Fourier coefficients of a cusp form, an asymptotic of size $\sqrt{X}$ was recently obtained by the first author~\cite{Pandey}. The main idea in that case is to use a recursive mechanism based on Voronoi's summation formula; this fundamentally uses the automorphic provenance of the coefficients. Note that in the case of cusp form Fourier coefficients, positivity is not available and it is the existence of a nice summation formula that is crucial. Finally for $a(n) = \mu_r(n)$ the indicator function of $r$-free numbers, Balog and Ruzsa~\cite{BalogRuzsa2} determined the order of magnitude of \eqref{eq:norm} to be $X^{1/(r + 1)}$ improving earlier work of Br\"udern, Granville, Perelli, Vaughan, and Wooley~\cite{etal}. Recently a short interval generalization was considered in~\cite{yuchen}. Finally Balog and Ruzsa~\cite{BalogRuzsa1} have also obtained a lower bound for \eqref{eq:norm} valid for any sequence that vanishes on non-squarefree integers. This may be substantially improved as a corollary of work of Konyagin~\cite{Konyagin}. A similar result for sequences supported on primes was recently obtained in~\cite{generalization}.

We mention also that in the next paper in this series we will obtain results valid for general multiplicative functions using different methods. 

%Specifically for non-negative sequences $a(n)$ rather satisfactory results can be obtained. For example for $a(n) = \Lambda(n)$ Vaughan obtains a lower bound of size $\sqrt{X}$ using information about the behavior near major arcs, while for $a(n) = d(n)$ Pandey obtains an asymptotic of size $\sqrt{X}$ using a recursive mechanism based on Voronoi's summation formula. Finally for $a(n) = \mu_r(n)$ the indicator function of $r$-free numbers the $L^1$ norm is of size $X^{1/(r + 1)}$ as shown by Balog and Ruzsa, improving earlier work of Br\"{u}dern, Granville, Perelli, Vaughan and Wooley. The case of short intervals was recently investigated by \cite{KaisaStudent}. In fact in \cite{...} Balog and Ruzsa established a general lower bound for sequences that vanish on non-squarefree integers, and this was slightly strenghtened recently in \cite{...}.

\subsection{Notation}
We write $e(x) := e^{2\pi i x}$. Given a smooth function $f$ we define respectively its Fourier transform, Mellin transform, and Sobolev norm as
\[
    \widehat{f}(u) := \int_{\mathbb{R}} f(x) e( - x u) dx \ , \ \widetilde{f}(s) := \int_{0}^{\infty} f(x) x^{s - 1} dx \ , \
    \| f \|_{p, r} := \sum_{i = 0}^{r} \Big ( \int_{\mathbb{R}} |f^{(i)}(x)|^p \Big )^{1/p}. 
\]

\section{Proof of Theorem \ref{thm:main} and Theorem \ref{thm:extended}}

Theorem \ref{thm:main} follows immediately from Theorem \ref{thm:extended}: Pick $\chi$ to be the trivial character and notice that $L(s, \chi) = \zeta(s)$ has a zero on the half-line with imaginary part $\gamma \approx 14.134725 \ldots$. In turn Theorem \ref{thm:extended} follows from two main propositions that we state below.

\begin{proposition} \label{prop:A}
  Let $\chi$ be a primitive character of conductor $q$.  
  Let $w$ and $\phi$ be two smooth functions, compactly supported in $(0,1)$. Let $t$ be a real number.
  Let $a(n)$ be a sequence of real numbers. For any prime $p$, define a new sequence $a_p(n) := a(n) \cdot (p \mathbf{1}_{p | n} - 1)$. Then,
  \begin{align*}
      \| \phi \|_{1,2} \|w \|_{1,0} \cdot q^2(1 + t^2) \int_{0}^{1} \Big | \sum_{n \leq X} a(n) & e(n \alpha) \Big | d \alpha \\ & \gg  \frac{1}{X} \sum_{p \leq \sqrt{X}} \Big | \int_{p^2}^{\infty} w \Big ( \frac{y}{X} \Big ) \mathcal{S}(a_{p}, t, \chi,  \phi)(y) \cdot \frac{dy}{y} \Big |,
  \end{align*}
  where we define for any sequence $a(n)$
  \begin{equation} \label{eq:define}
      \mathcal{S}(a, t, \chi, \phi)(y) := \sum_{n} a(n) \chi(n) n^{-it} \phi \Big ( \frac{n}{y} \Big ).
    \end{equation}
\end{proposition}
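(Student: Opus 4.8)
\section*{Proof proposal for Proposition~\ref{prop:A}}

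The plan is to argue by duality. Write $F(\alpha):=\sum_{n\le X}a(n)e(n\alpha)$, so that the left-hand side integral is $\|F\|_{L^1[0,1]}$, and denote by $\mathcal I_p$ the quantity $\tfrac1X\int_{p^2}^{\infty}w(y/X)\,\mathcal S(a_p,t,\chi,\phi)(y)\,\tfrac{dy}{y}$ appearing on the right. I will exhibit a bounded function $K$ on $[0,1]$ and unimodular constants $\varepsilon_p$ with
\[
\sum_{p\le\sqrt X}|\mathcal I_p|=\sum_{p\le\sqrt X}\varepsilon_p\mathcal I_p=\int_0^1 F(\alpha)K(\alpha)\,d\alpha .
\]
Then $\sum_p|\mathcal I_p|\le\|K\|_\infty\,\|F\|_{L^1}$, so it suffices to prove $\|K\|_\infty\ll q^2(1+t^2)\|\phi\|_{1,2}\|w\|_{1,0}$.

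To build $K$, I expand $\chi(n)=\tau(\bar\chi)^{-1}\sum_{b\bmod q}\bar\chi(b)e(bn/q)$ via the Gauss sum, write $n^{-it}\phi(n/y)=y^{-it}\psi(n/y)$ with $\psi(v):=v^{-it}\phi(v)$ and expand $\psi$ by Fourier inversion, and use the elementary identity $p\mathbf{1}_{p\mid n}-1=\sum_{j=1}^{p-1}e(jn/p)$, which yields $\sum_n a_p(n)e(n\beta)=\sum_{j=1}^{p-1}F(\beta+j/p)$ (every sum here is finite, since $\phi,w$ are supported in $(0,1)$, forcing $n<y<X$ throughout). Folding the resulting integral over $\mathbb R$ back to $[0,1]$ by $1$-periodicity of $F$ gives
\[
\mathcal I_p=\frac{1}{X\,\tau(\bar\chi)}\int_0^1 F(\alpha)\sum_{b\bmod q}\bar\chi(b)\sum_{j=1}^{p-1}\mathcal W_p\Big(\alpha-\tfrac bq-\tfrac jp\Big)\,d\alpha,
\]
where $\mathcal W_p$ is the $1$-periodization of $W_p(u):=\int_{p^2}^{\infty}w(y/X)\,y^{-it}\,\widehat\psi(yu)\,dy$. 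Taking $\varepsilon_p$ with $\varepsilon_p\mathcal I_p=|\mathcal I_p|$ and $K:=(X\tau(\bar\chi))^{-1}\sum_p\varepsilon_p\sum_{b,j}\bar\chi(b)\,\mathcal W_p(\,\cdot-b/q-j/p)$ gives the identity above, and since $|\tau(\bar\chi)|=\sqrt q$,
\[
\|K\|_\infty\le\frac{1}{X\sqrt q}\,\sup_{\alpha}\ \sum_{p\le\sqrt X}\ \sum_{b\bmod q}\ \sum_{j=1}^{p-1}\Big|\mathcal W_p\Big(\alpha-\tfrac bq-\tfrac jp\Big)\Big|.
\]

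It remains to estimate this, for which I need a decay bound for $\mathcal W_p$ and a counting input. For the decay, $|\widehat\psi(\xi)|\ll\min\!\big(\|\psi\|_1,\ \|\psi''\|_1|\xi|^{-2}\big)$, and expanding $\psi''=(v^{-it})''\phi+2(v^{-it})'\phi'+v^{-it}\phi''$ gives $\|\psi''\|_1\ll(1+t^2)\|\phi\|_{1,2}$ (using that $\phi$ is supported away from the origin). Here the cutoff $y\ge p^2$ is essential: it converts $\int_{p^2}^{X}|w(y/X)|\,y^{-2}\,dy$ into a factor $p^{-2}$, so $|W_p(u)|\ll A\min\!\big(X,(p|u|)^{-2}\big)$ and hence $|\mathcal W_p(\beta)|\ll A\min\!\big(X,(p\|\beta\|)^{-2}\big)$, where $A:=(1+t^2)\|\phi\|_{1,2}\|w\|_{1,0}$ and $\|\beta\|$ denotes the distance from $\beta$ to $\mathbb Z$. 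Now fix $\alpha$ and split the triples $(p,b,j)$ according to whether $\|\alpha-b/q-j/p\|\le(p\sqrt X)^{-1}$ (``resonant'') or not. For a fixed prime $p\nmid q$ the points $b/q+j/p$ are pairwise $\ge1/(pq)$ apart, so the $k$-th closest to $\alpha$ is $\gg k/(pq)$ away, and the non-resonant part contributes $\ll A\sum_{k\ge1}q^2/k^2\ll Aq^2$ per prime, hence $\ll Aq^{3/2}/\sqrt X$ to $\|K\|_\infty$ after summing over $p$ and dividing by $X\sqrt q$. For the resonant part, there are $O(1+q/\sqrt X)$ resonant pairs $(b,j)$ for each $p$ (those with $bp+jq$ within $q/\sqrt X$ of $pq\alpha$ modulo $pq$), and --- the crucial point --- only $O(q^2)$ primes $p$ can be resonant at all: two fractions $b/q+j/p$ and $b'/q+j'/p'$ from distinct primes differ by a nonzero multiple of $1/(qpp')$, which forces all resonant primes but one into $[\sqrt X/(2q),\sqrt X]$, whereupon the associated fractions lie in an interval of length $O(q/X)$ while being $\gg1/(qX)$-separated. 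Since each resonant term is $\ll AX$, the resonant contribution to $\|K\|_\infty$ is $\ll\tfrac{1}{X\sqrt q}\cdot q^2\cdot AX\ll Aq^{3/2}$ (the $O(\log q)$ primes dividing $q$, where the map $(b,j)\mapsto b/q+j/p$ degenerates onto the points $c/q$, are handled separately and are negligible thanks to the factor $1/X$). Combining, $\|K\|_\infty\ll Aq^{3/2}\ll q^2(1+t^2)\|\phi\|_{1,2}\|w\|_{1,0}$, as required.

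I expect the combinatorial counting of resonant primes to be the main obstacle: it is precisely the assertion that $\alpha$ cannot be simultaneously very close to $b/q+j/p$ for many distinct primes $p$, and it is what allows the argument to succeed with only the $|\xi|^{-2}$ decay of $\widehat\psi$, which is all that is available given that the target bound involves $\|\phi\|_{1,2}$ and $(1+t^2)$ rather than higher Sobolev norms and powers of $t$. A secondary technical point is bounding the quantities $\int v^{-2}|\phi(v)|\,dv$ and $\int v^{-1}|\phi'(v)|\,dv$ produced by differentiating $v^{-it}$ in terms of $\|\phi\|_{1,2}$, using that $\phi$ is supported away from $0$, together with a minor separate treatment of the range $q>\sqrt X$, where the same analysis applies but is only easier.
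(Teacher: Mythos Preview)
Your duality strategy is a reasonable idea, but the non-resonant estimate has a real gap. You assert that for a fixed prime $p\nmid q$ the points $b/q+j/p$ are $\geq 1/(pq)$-separated, ``so the $k$-th closest to $\alpha$ is $\gg k/(pq)$ away.'' Separation only gives this for $k\geq 2$; for $k=1$ the nearest point can be at distance anywhere in $[0,1/(2pq)]$. If a prime $p$ happens to have its nearest point at distance $d_1$ with $1/(p\sqrt X)<d_1\ll 1/(pq)$ (so $p$ is \emph{not} resonant in your sense), that single term contributes $A(pd_1)^{-2}$, which lies between $Aq^2$ and $AX$ and is not covered by either of your two cases. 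Your per-prime bound $\ll Aq^2$ therefore fails for such primes, and your Farey-spacing argument for counting resonant primes does not exclude them: the constraint $1/(qpp')\leq d_1+d_1'$ with $d_1,d_1'\ll 1/(pq),1/(p'q)$ only yields $p+p'\gg 1$, which is vacuous. A dyadic decomposition in the scale $pd_1$ together with your Farey counting seems to give $\|K\|_\infty\ll Aq^{3/2}\log X$ rather than $Aq^{3/2}$, so even the natural repair costs a logarithm, which the stated proposition does not allow.

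The paper sidesteps this pointwise difficulty entirely. Instead of bounding a dual kernel in $L^\infty$, it first uses Young's inequality to write
\[
\Big\|\sum_n a(n)\chi(n)n^{-it}\phi(n/y)e(n\,\cdot\,)\Big\|_{L^1}\ \leq\ \|F\|_{L^1}\cdot\Big\|\sum_n \chi(n)n^{-it}\phi(n/y)e(n\,\cdot\,)\Big\|_{L^1},
\]
and bounds the second factor by $\ll q^2(1+t^2)\|\phi\|_{1,2}$ via Poisson summation (an $L^1$ estimate, not $L^\infty$). Only then does it pass to Farey arcs, inserting a nonnegative minorant $\varrho$ with $\widehat\varrho>0$ on \emph{disjoint} arcs around the $b/p$ and pulling the absolute value outside the sum over $b$; disjointness replaces the delicate overlap accounting your $\|K\|_\infty$ bound requires. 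The Ramanujan-sum identity $\sum_{(b,p)=1}e(nb/p)=p\mathbf 1_{p\mid n}-1$ then produces $a_p$, and integrating in $y$ against $|w(y/X)|$ finishes. The trade-off is that the paper needs the auxiliary positive-Fourier-transform kernel $\varrho$ (and the modified weight $\phi_1=\phi/(m\widehat\varrho)$), whereas your approach would, if it went through, be more direct.
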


In the next Proposition we establish the existence of a special kernel that resonates cleanly with the partial sums of $\lambda(n) \chi(n) n^{-i \gamma}$ when $\gamma$ is the ordinate of a zero of $L(s, \chi)$. The construction of such kernels can be traced back to Pintz \cite{Pintz}. 

\begin{proposition} \label{prop:B} 
  Let $\chi$ be a primitive Dirichlet character.
  Let $\rho = \beta + i \gamma$ denote a zero of $L(s, \chi)$.
  Let $A > 10$. 
  There exists a smooth function $w_{A, \rho,\chi}$ compactly supported in $(0,1)$ such that
  \begin{enumerate}
  \item for all $0 \leq u \leq 1$ we have $w_{A, \rho, \chi}(u) \ll_{\rho, \chi} u^{A}$.
  \item we have, for any $Y > 1$, 
      \begin{align} \label{eq:stuff}
          \int_{0}^{\infty} w_{A, \rho, \chi} \Big ( \frac{y}{Y} \Big ) \mathcal{S}(\lambda, \gamma, \chi, \phi )(y) \cdot \frac{dy}{y} = \widetilde{\phi} ( \beta ) Y^{\beta} + O_{A, \rho,\chi}(Y^{-A})
      \end{align}
   with $\mathcal{S}(a, t, \chi, \phi)$ defined in \eqref{eq:define}.
  \end{enumerate}
\end{proposition}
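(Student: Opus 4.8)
The plan is to construct $w_{A,\rho,\chi}$ via its Mellin transform. If we write $W(s) = \widetilde{w_{A,\rho,\chi}}(s)$, then by Mellin inversion the left-hand side of \eqref{eq:stuff} becomes a contour integral. The key is that $\mathcal{S}(\lambda,\gamma,\chi,\phi)(y) = \sum_n \lambda(n)\chi(n) n^{-i\gamma}\phi(n/y)$ has, by Mellin inversion applied to $\phi$, the representation $\frac{1}{2\pi i}\int_{(c)} \widetilde{\phi}(s) y^s \cdot \frac{L(2s+2i\gamma, \chi^2)}{L(s+i\gamma,\chi)}\, ds$ for $c$ large, since $\sum_n \lambda(n)\chi(n)n^{-w} = L(2w,\chi^2)/L(w,\chi)$. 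Thus, after integrating against $w_{A,\rho,\chi}(y/Y)\,dy/y$ and interchanging, the left-hand side of \eqref{eq:stuff} equals $\frac{1}{2\pi i}\int_{(c)} \widetilde{\phi}(s)\, W(-s)\, Y^s \cdot \frac{L(2s+2i\gamma,\chi^2)}{L(s+i\gamma,\chi)}\, ds$, using the scaling identity $\int_0^\infty w(y/Y) y^{s-1}\,dy = Y^s W(s)$ combined with the reflection in the variable.

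**Choosing the kernel.** I want to move the contour to the left to pick up the pole of $1/L(s+i\gamma,\chi)$ at $s = \rho - i\gamma = \beta$ (where $L(s+i\gamma,\chi) = L(\rho,\chi) = 0$), which contributes a residue of the shape $\widetilde{\phi}(\beta)\, W(-\beta)\, Y^\beta \cdot \frac{L(2\beta + 2i\gamma, \chi^2)}{L'(\rho,\chi)}$ (assuming for simplicity a simple zero; in general one takes a suitable linear combination of derivatives). To make this match $\widetilde{\phi}(\beta) Y^\beta$ exactly, I should choose $W(s)$ so that $W(-\beta) \cdot \frac{L(2\beta+2i\gamma,\chi^2)}{L'(\rho,\chi)} = 1$, i.e. prescribe the value of $W$ at one point. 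To then get the error term $O_{A,\rho,\chi}(Y^{-A})$, I need to be able to push the remaining contour all the way to $\mathrm{Re}(s) = -A$, which forces $W(-s)$ to decay rapidly in vertical strips (so the tails converge) and to be holomorphic there, and it forces us to avoid the other zeros of $L(s+i\gamma,\chi)$ — here I would either absorb finitely many nearby zeros into additional prescribed residues that I then cancel by having $W$ vanish appropriately, or more cleanly, build $W(s)$ as an entire function of exponential-type decay (e.g. a product/exponential of the form used by Pintz, something like $e^{c s^2}$ times a polynomial-in-$s$ factor arranged to kill unwanted residues) times a factor that handles $L(s+i\gamma,\chi)$ in the denominator. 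Property (1), $w_{A,\rho,\chi}(u) \ll u^A$ as $u\to 0$, corresponds exactly to $W(s)$ being holomorphic and rapidly decaying for $\mathrm{Re}(s) \geq -A$: shifting the Mellin inversion integral for $w_{A,\rho,\chi}$ to the line $\mathrm{Re}(s) = -A$ gives the $u^A$ bound, and compact support in $(0,1)$ is arranged by the exponential-type decay of $W$ on vertical lines (Paley–Wiener).

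**The main obstacle.** The delicate point is reconciling three competing demands on $W$: (a) it must decay fast enough vertically that all contour shifts and interchanges are justified and that $w_{A,\rho,\chi}$ is genuinely smooth and compactly supported; (b) it must be holomorphic in the half-plane $\mathrm{Re}(s)\geq -A$ and arranged so the \emph{only} surviving residue when we shift from $(c)$ down to $(-A)$ is the one at $s=\beta$ producing precisely $\widetilde{\phi}(\beta)Y^\beta$ — this requires understanding the zeros of $L(s+i\gamma,\chi)$ and of $L(2s+2i\gamma,\chi^2)$ (the latter in the denominator's numerator, contributing no poles, good; but the former needs care) in the region $-A \leq \mathrm{Re}(s) \leq c$, and using a classical zero-density or just the finiteness of zeros in a bounded box plus a weight vanishing at them; and (c) the normalization at $s=-\beta$ must be compatible with (b). I expect the cleanest route is: take $W(s) = P(s) e^{\delta s^2}$ where $\delta>0$ is small and $P$ is a polynomial whose zeros are placed at $-\rho_j + i\gamma$ for the finitely many zeros $\rho_j \neq \rho$ of $L(\cdot,\chi)$ with $|\mathrm{Re}(\rho_j)| \leq A$ (so those residues vanish), times a normalizing constant fixing $W(-\beta)$; one must check $P$ does not accidentally vanish at $-\beta$, and handle the contribution of the real axis from $\zeta$-type poles if $\chi$ is trivial (the pole of $L(2s+2i\gamma,\chi^2)$ when $\chi^2$ is principal and $2s+2i\gamma=1$, i.e. $s = 1/2 - i\gamma$, which lies to the right of $\beta$ only if $\beta < 1/2$ — this is exactly why the theorem produces $\beta_\chi/2$ and not more, and where one must be careful about which residue dominates). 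Verifying that this construction simultaneously satisfies (1) and (2) with all error terms genuinely $O_{A,\rho,\chi}(Y^{-A})$, uniformly in $Y>1$, is the technical heart of the argument.
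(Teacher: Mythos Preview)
Your Mellin-transform framework is correct, and you have correctly identified the crux: when shifting the contour to $\mathrm{Re}(s)=-A$, one must control the poles arising from \emph{every} zero of $L(s+i\gamma,\chi)$, not just the one at $s=\beta$. But your proposed fix has a genuine gap. You suggest taking a polynomial $P(s)$ vanishing at ``the finitely many zeros $\rho_j\neq\rho$ with $|\mathrm{Re}(\rho_j)|\le A$''; however there are \emph{infinitely} many such zeros (already infinitely many in the critical strip alone, since $N(T,\chi)\to\infty$), so no polynomial can absorb them. Even allowing the Gaussian factor $e^{\delta s^2}$ to damp the high zeros, you would pick up residues from all of them, and the result would not reduce to the clean $\widetilde\phi(\beta)Y^\beta+O(Y^{-A})$. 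Separately, $e^{\delta s^2}$ is not of exponential type along the real axis, so Paley--Wiener does not apply and the resulting $w$ would not be compactly supported in $(0,1)$.

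The paper's device (going back to Pintz) removes both obstacles in one stroke: it places $L(s+i\gamma,\chi)$ in the \emph{numerator} of the Mellin transform. Concretely, up to a normalizing constant,
\[
\widetilde{w_{A,\rho,\chi}}(s)\;=\;\frac{(s+i\gamma-1)\,L(s+i\gamma,\chi)}{s-\beta}\,\Big(\frac{1-e^{-(s-\beta)}}{s-\beta}\Big)^{2A}.
\]
After multiplying by the Dirichlet series $L(2s+2i\gamma,\chi^2)/L(s+i\gamma,\chi)$ for $\sum\lambda(n)\chi(n)n^{-s-i\gamma}$, the factor $L(s+i\gamma,\chi)$ cancels entirely, so \emph{no} zero of $L$ contributes a pole; the only pole left is the manufactured simple one at $s=\beta$ from the explicit $1/(s-\beta)$. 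The entire factor $\bigl(\tfrac{1-e^{-z}}{z}\bigr)^{2A}=\bigl(\int_0^1 e^{-zt}\,dt\bigr)^{2A}$ is of exponential type, equals $1$ at $z=0$, and supplies both the vertical decay for the contour shifts and, via Paley--Wiener in the Mellin picture, the compact support of $w$ in $(0,1)$; shifting the inversion contour to $\mathrm{Re}(s)=-A$ then gives $w(u)\ll u^A$. This cancellation trick is the idea missing from your outline.
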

%If one were to include the case of the trivial character $\chi_0$ there would be an additional term on the right-hand side of \eqref{eq:stuff} arising from the pole of $L(s, \chi_0) = \zeta(s)$. 

We are now ready to conclude the proof.
\begin{proof}[Proof of Theorem \ref{thm:extended}]
  Let $\rho = \beta + i \gamma$ be a zero of $L(s, \chi)$. 
  Let $0 \leq \phi \leq 1$ be a smooth function compactly supported in $(0, 1)$. The function $\phi$ is fixed once and for all thus the implicit constants in $\gg$ will not depend on $\phi$. 
  First notice that for each $p \leq X^{1/2 - \varepsilon}$, due to the rapid decay of the kernel $w_{A, \rho, \chi}$ (where we pick $A = 100 / \varepsilon$) we have
  $$
  \int_{p^2}^{\infty} w_{A, \rho, \chi} \Big ( \frac{y}{X} \Big ) \mathcal{S}(\lambda_{p}, t, \chi, \phi)(y) \cdot \frac{dy}{y} = \int_{0}^{\infty} w_{A, \rho, \chi} \Big ( \frac{y}{X} \Big ) \mathcal{S}(\lambda_{p}, t, \chi, \phi)(y) \cdot \frac{dy}{y} + O_{\rho, \varepsilon, \chi}(X^{-10}).
  $$
  Since $\lambda$ is completely multiplicative, we have
  $$
  \mathcal{S}(\lambda_{p}, \gamma, \chi, \phi)(y) = - p^{1-i\gamma} \chi(p) \mathcal{S}(\lambda, \gamma, \chi, \phi)(y / p) - \mathcal{S}(\lambda, \gamma, \chi, \phi)(y).
  $$
  Therefore, by Proposition \ref{prop:B}, we have
  \begin{align*}
      \int_{0}^{\infty} w_{A, \rho, \chi} \Big ( \frac{y}{X} \Big ) & \mathcal{S}(\lambda_{p}, t, \chi, \phi)(y) \cdot \frac{dy}{y} \\ & = - \widetilde{\phi} ( \beta) \cdot \Big ( p^{-i \gamma} \chi(p) p \cdot \Big ( \frac{X}{p} \Big )^{\beta} + X^{\beta} \Big ) + O_{\rho, \varepsilon, \chi}(X^{-10}). 
  \end{align*}
  In absolute value this is $\geq \widetilde{\phi}(\beta) \cdot (X^{\beta} p^{1 - \beta} - X^{\beta}) \gg X^{\beta} p^{1 - \beta}$ for $p$ co-prime to the conductor of $\chi$. 
  Therefore by Proposition \ref{prop:A},
  $$
  \int_{0}^{1} \Big | \sum_{n \leq X} \lambda(n) e(n \alpha) \Big | d \alpha \gg_{\rho, \varepsilon, \chi} \frac{1}{X} \sum_{\substack{p \leq X^{1/2 - \varepsilon}\\ p\nmid q}} X^{\beta} p^{1 - \beta} \gg X^{\beta / 2 - \varepsilon},$$
  and the result follows.
\end{proof}

\section{Proof of Proposition \ref{prop:A}}

We first need a bound for $L^1$ norms of exponential sums with coefficients composed of multiplicative characters. We note that the exponents in the Lemma below can be sharpened but such sharper versions are of no use to us. 
\begin{lemma} \label{le:voronoi}
  Let $\chi$ be a primitive character of conductor $q$. 
  Let $0 \leq \phi \leq 1$ be
  a smooth function compactly supported in $(0, 1)$. Then, for all $ y > 1$, real $t$,
  $$
  \int_{0}^{1} \Big | \sum_{n} n^{-it} \chi(n) \phi \Big ( \frac{n}{y} \Big ) e(n \alpha) \Big | dt \ll (1 + t^2) q^2 \cdot \| \phi \|_{1,2}.
  $$
\end{lemma}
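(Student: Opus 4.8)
The plan is to open up the character $\chi$ into additive characters, reducing to the trivial–character case, and then to estimate the resulting smooth exponential sum by Poisson summation; after Poisson the $L^1$ norm over $\alpha$ collapses to the $L^1$ norm of a Fourier transform, which is controlled by a Sobolev norm. Concretely, since $\chi$ is primitive of conductor $q$, one has the Gauss–sum identity $\chi(n)=\tau(\bar\chi)^{-1}\sum_{a\bmod q}\bar\chi(a)e(an/q)$ for every integer $n$, with $|\tau(\bar\chi)|=\sqrt q$. Substituting this, applying the triangle inequality, and using that $\alpha\mapsto\alpha+a/q$ is a measure–preserving bijection of $\mathbb R/\mathbb Z$ (so that each of the $q$ resulting integrals equals the same integral with $\alpha$ replacing $\alpha+a/q$), one obtains
\[
\int_0^1\Big|\sum_n n^{-it}\chi(n)\phi(n/y)e(n\alpha)\Big|\,d\alpha\ \le\ \sqrt q\int_0^1\Big|\sum_n n^{-it}\phi(n/y)e(n\beta)\Big|\,d\beta,
\]
and $\sqrt q\le q^2$ is already far more than enough. (One could retain a smaller power of $q$ by keeping track of the primitivity of $\chi$ in the remaining steps, but this is not needed.)

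For the sum on the right I would write $g(x):=x^{-it}\phi(x/y)$, a smooth function supported in $(0,y)$, and $\psi(u):=u^{-it}\phi(u)$, so that $g(x)=y^{-it}\psi(x/y)$ and hence $\widehat g(\xi)=y^{1-it}\widehat\psi(y\xi)$. Poisson summation gives $\sum_n g(n)e(n\beta)=\sum_{k\in\mathbb Z}\widehat g(k-\beta)$, with the series converging absolutely because $\psi$ is Schwartz and so $\widehat\psi$ has rapid decay. Taking absolute values and integrating over $\beta\in[0,1]$, the sum over $k\in\mathbb Z$ unfolds into an integral over all of $\mathbb R$, yielding
\[
\int_0^1\Big|\sum_n g(n)e(n\beta)\Big|\,d\beta\ \le\ \int_{\mathbb R}|\widehat g(\xi)|\,d\xi\ =\ \|\widehat\psi\|_1 .
\]

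It then remains to bound $\|\widehat\psi\|_1$. By the standard device, $(1+\xi^2)\widehat\psi(\xi)$ is the Fourier transform of $\psi-(4\pi^2)^{-1}\psi''$, so $\|(1+\xi^2)\widehat\psi\|_\infty\le\|\psi\|_1+(4\pi^2)^{-1}\|\psi''\|_1\le\|\psi\|_{1,2}$, and dividing by $1+\xi^2$ and integrating gives $\|\widehat\psi\|_1\le\pi\|\psi\|_{1,2}$. Finally, Leibniz's rule applied to $\psi=u^{-it}\phi$, together with $\bigl|\tfrac{d^m}{du^m}u^{-it}\bigr|\ll(1+|t|)^m u^{-m}\ll(1+t^2)u^{-m}$ for $m\le 2$ and the boundedness of $u^{-m}$ on the fixed compact support of $\phi$, gives $\|\psi\|_{1,2}\ll(1+t^2)\|\phi\|_{1,2}$, with the implied constant depending only on the support of $\phi$. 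Chaining the three displays proves the lemma.

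I do not expect a genuine obstacle here: the argument is entirely soft. The only points that need a little care are the justification of Poisson summation (immediate from the smoothness and compact support of $\phi$, which also ensures only $n\asymp y$ contribute, so no $n\le 0$ terms appear) and making sure the dependence on $q$ and on $t$ stays polynomial; as the statement itself remarks, both are handled wastefully on purpose, since sharper exponents are of no use in the sequel.
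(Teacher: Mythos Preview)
Your proof is correct, and it takes a somewhat different (and in fact cleaner) route from the paper's. The paper applies Poisson summation in residue classes modulo $q$ directly to the full sum, obtaining Gauss sums on the dual side, then integrates by parts twice to get pointwise decay $\ll y(1+|t|)^2(y|\alpha-\ell/q|)^{-2}\|\phi\|_{1,2}$ away from the rationals $\ell/q$; the integration over $\alpha$ (with the neighbourhoods $|\alpha-\ell/q|\le 1/(qy)$ handled trivially) then produces the factor $q^2$. You instead strip off $\chi$ via the Gauss-sum identity and the translation invariance of $d\alpha$, paying only $\sqrt{q}$, and then reduce the untwisted $L^1$ norm to $\|\widehat{\psi}\|_1$ by Poisson and unfolding, bounding that by the Sobolev norm. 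This avoids the case analysis and gives a sharper $q$-exponent, though as both you and the paper note, neither refinement is needed downstream. One small caveat that applies equally to both arguments: bounding $\|\psi\|_{1,2}$ by $(1+t^2)\|\phi\|_{1,2}$ uses that $u^{-1},u^{-2}$ are bounded on the support of $\phi$, so the implied constant depends on how close that support comes to $0$; since in the application $\phi$ is fixed once and for all, this is harmless.
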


\begin{proof}
  Notice that it's enough to bound
  $$
  \int_{-1/2}^{1/2} \Big | \sum_{n} n^{-it} \chi(n) \phi \Big ( \frac{n}{y} \Big )  e(n \alpha) \Big |. 
  $$
  Furthermore for every integer $|a| \leq q/2$ the contribution of all the neighborhoods $|\alpha - a / q| \leq 1 / (q y)$ is trivially bounded by $\ll 1$. 
  We thus assume $1/(q y) < |\alpha - a / q|$ for every integer $|a| \leq q / 2$ and $|\alpha| \leq \tfrac 12$.  
  Poisson summation gives
  \begin{align} \label{eq:sum}
  \sum_{n} n^{-it} & \chi(n) \phi \Big ( \frac{n}{y} \Big ) e(n \alpha) \\ \nonumber & = \frac{1}{q} \sum_{\ell} \Big ( \sum_{x \pmod{q}} \chi(x) e \Big ( - \frac{x \ell}{q} \Big ) \Big ) \int_{0}^{\infty} x^{-it} \phi \Big ( \frac{x}{y} \Big ) e \Big (x \alpha - \frac{\ell x}{q} \Big ) d x. 
  \end{align}
  By a change of variables $u := x / y$ the integral is equal to 
  $$y^{1 - it} \int_{0}^{\infty} \phi (u) u^{-it} e \Big (u y \Big (  \alpha - \frac{\ell}{q} \Big ) \Big ) d u.$$
  Integrating by parts twice, we see that this is 
  $$
  \ll y (1 + |t|)^{2} \cdot (y |\alpha - \ell / q|)^{-2} \cdot \| \phi \|_{1,2}.
  $$
  Therefore \eqref{eq:sum} is, for $|\alpha| \leq \tfrac 12$,
  $$
  \ll \| \phi \|_{1,2} \cdot \Big ( \frac{1 + t^2}{y} \sum_{|\ell| \leq q / 2} |\alpha - \ell / q|^{-2} \Big).% + (t q)^2 \cdot \frac{1}{y} \Big ).
  $$
  Integrating this over $\alpha$ with $1 / (q y) \leq |\alpha - \ell / q|$ for every $|\ell| \leq q / 2$ then yields the final bound $\ll (1 + t^2) q^2 \| \phi \|_{1,2}$. 
\end{proof}

\begin{lemma} \label{le:young}
  Let $\chi$ be a primitive character of conductor $q$. 
  Let $0 \leq \phi \leq 1$ be smooth and compactly supported in $(0, 1)$. Let $t$ be a real number.
  Then, for any $y \leq X$, 
  $$
  \| \phi \|_{1,2} \cdot q^2(1 + t^2) \int_{0}^{1} \Big | \sum_{n \leq X} \lambda(n) e(n \alpha) \Big | d \alpha \gg \int_{0}^{1} \Big | \sum_{n} \lambda(n) \chi(n) n^{-it} \phi \Big ( \frac{n}{y} \Big ) e(n \alpha) \Big | d \alpha.
  $$
\end{lemma}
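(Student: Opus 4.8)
The plan is to deduce Lemma~\ref{le:young} from Lemma~\ref{le:voronoi} by expressing the partial sums of $\lambda(n)\chi(n)n^{-it}$ as a convolution against the partial sums of $\lambda(n)e(n\alpha)$, and then using Young's convolution inequality on the circle. The key observation is that the character-twisted, Mellin-smoothed sequence we want on the right-hand side can be obtained from the plain Liouville sum by convolving (in the multiplicative/Dirichlet sense) against the coefficient sequence $c(m) := \lambda(m)\chi(m)m^{-it}$, since $\lambda$ is completely multiplicative: $\lambda(n)\chi(n)n^{-it} = \sum_{n = md}\lambda(d)\,c(m)$ is \emph{not} quite right because $\lambda$ is not the identity, so instead one should set this up as follows. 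Write $\psi(n) := \chi(n)n^{-it}$ and use that $\lambda\psi$ is completely multiplicative, so that for a suitable smooth weight the Dirichlet-series identity $\sum_n \lambda(n)\psi(n)\phi(n/y) = \int (\text{kernel})\cdot \big(\sum_{n}\lambda(n)e(n\cdot)\big)$ holds after expanding $\phi(n/y)$ by Mellin inversion and inserting a partition. I would instead follow the cleaner route: observe that
\[
    \sum_{n}\lambda(n)\chi(n)n^{-it}\phi\Big(\frac{n}{y}\Big)e(n\alpha)
    = \int_0^1 K(\alpha-\beta)\,\Big(\sum_{m\le X}\lambda(m)e(m\beta)\Big)\,d\beta
\]
for an appropriate kernel $K$ depending on $y$, $t$, $\chi$, $\phi$, whose $L^1$ norm is exactly the quantity bounded in Lemma~\ref{le:voronoi}.

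More concretely, the first step is to set up the identity. Since $\chi(n)n^{-it}$ is itself a completely multiplicative (non-Liouville) factor, one cannot literally convolve out $\lambda$; the right thing is the \emph{additive} convolution coming from the fact that the Fourier coefficients of the product $\big(\sum\lambda(m)e(m\beta)\big)\cdot\big(\sum_n \chi(n)n^{-it}\phi(n/y)e(n\beta)\big)$ pick out, at frequency $k$, the quantity $\sum_{m+n=k}\lambda(m)\chi(n)n^{-it}\phi(n/y)$ --- which is still not what we want. So the correct mechanism must be multiplicative: extend $\phi$ via Mellin inversion, $\phi(n/y) = \frac{1}{2\pi i}\int_{(\sigma)}\widetilde{\phi}(s)(y/n)^s\,ds$, so that
\[
    \sum_n \lambda(n)\chi(n)n^{-it}\phi\Big(\frac{n}{y}\Big)e(n\alpha)
    = \frac{1}{2\pi i}\int_{(\sigma)}\widetilde{\phi}(s)\,y^s\sum_{n\le X}\lambda(n)\chi(n)n^{-it-s}e(n\alpha)\,ds,
\]
using that the support of $\phi$ truncates $n$ to $n\le y\le X$. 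Now one uses partial summation / another Mellin step to strip the factor $\chi(n)n^{-it-s}$: writing $\chi(n)n^{-it-s}$ as a Mellin transform of the smooth sum $\sum \chi(n)n^{-it}\phi_0(n/z)$ and interchanging, one reduces to controlling $\sum_{n\le X}\lambda(n)e(n\alpha)$ against $\sum_n \chi(n)n^{-it}\phi_0(n/z)e(n\alpha)$ for various $z\le X$. The second step is then to apply Lemma~\ref{le:voronoi} to each such character sum to get a uniform bound $\ll (1+t^2)q^2\|\phi_0\|_{1,2}$ after integrating in $\alpha$, and the third step is to collect the $s$-integral (which converges absolutely because $\widetilde{\phi}(s)$ decays rapidly on vertical lines by smoothness of $\phi$) to obtain the claimed inequality with an implied constant depending only on the $\|\phi\|_{1,2}$-type norms and polynomially on $q$ and $(1+t^2)$.

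I expect the main obstacle to be organizing the interchange of summation and the Mellin integral so that the character sum appearing inside is exactly of the shape to which Lemma~\ref{le:voronoi} applies, while keeping the implied constants of the stated polynomial shape $\|\phi\|_{1,2}\,q^2(1+t^2)$ and not, say, $(1+t^2)^2$ or $q^3$; this requires choosing the Mellin contour $\sigma$ appropriately (e.g.\ $\sigma$ a small fixed positive constant) and being careful that the auxiliary smooth weight $\phi_0$ inherits bounded Sobolev norm. A secondary technical point is handling the tail $y<n\le X$ correctly --- but since $\phi$ is supported in $(0,1)$ and $y\le X$, the sum over $n$ is automatically restricted to $n<y\le X$, so the partial sum $\sum_{n\le X}\lambda(n)e(n\alpha)$ on the right side of the convolution genuinely sees all the relevant terms and no truncation error arises. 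Once the identity and the per-$z$ bound are in place, the conclusion is immediate.
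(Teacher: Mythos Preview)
You actually state the correct argument in your second paragraph and then talk yourself out of it. The identity
\[
\sum_{n} \lambda(n)\chi(n)n^{-it}\phi\Big(\frac{n}{y}\Big)e(n\alpha)
= \int_0^1 K(\alpha-\beta)\Big(\sum_{m\le X}\lambda(m)e(m\beta)\Big)\,d\beta,
\quad K(\beta):=\sum_{n}\chi(n)n^{-it}\phi\Big(\frac{n}{y}\Big)e(n\beta),
\]
is exactly what the paper uses, and it is a one-line verification: convolution on $\mathbb{R}/\mathbb{Z}$ multiplies Fourier coefficients \emph{pointwise}, so if $f(\alpha)=\sum_m a_m e(m\alpha)$ and $g(\alpha)=\sum_n b_n e(n\alpha)$ then $(f\ast g)(\alpha)=\sum_n a_n b_n e(n\alpha)$. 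Taking $a_n=\lambda(n)\mathbf{1}_{n\le X}$ and $b_n=\chi(n)n^{-it}\phi(n/y)$ gives $a_n b_n=\lambda(n)\chi(n)n^{-it}\phi(n/y)$, the cutoff $n\le X$ being redundant since $\phi$ is supported in $(0,1)$ and $y\le X$. Young's inequality then gives
\[
\int_0^1 |\,\cdot\,|\,d\alpha \le \Big(\int_0^1\Big|\sum_{m\le X}\lambda(m)e(m\alpha)\Big|\,d\alpha\Big)\cdot \int_0^1 |K(\beta)|\,d\beta,
\]
and Lemma~\ref{le:voronoi} bounds the last factor by $\ll (1+t^2)q^2\|\phi\|_{1,2}$. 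That is the entire proof.

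Your ``more concrete'' paragraph derails because you compute the \emph{product} $f\cdot g$ (whose Fourier coefficients are the additive convolution $\sum_{m+n=k}a_m b_n$) rather than the convolution $f\ast g$; that is why you see $\sum_{m+n=k}$ and conclude the additive route fails. It does not fail --- you were looking at the wrong operation. The subsequent Mellin detour is unnecessary and, as sketched, does not work: the sentence ``writing $\chi(n)n^{-it-s}$ as a Mellin transform of the smooth sum $\sum \chi(n)n^{-it}\phi_0(n/z)$'' has no clear meaning (a single coefficient is not a Mellin transform of a sum), and even if one reorganised it, one would be integrating the bound of Lemma~\ref{le:voronoi} with $t$ replaced by $t+\Im s$ along a vertical line, degrading the stated $(1+t^2)$ dependence and the $\|\phi\|_{1,2}$ control you are trying to preserve.
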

\begin{proof}
  Notice that
  \begin{align*}
  \sum_{n} \lambda(n) \chi(n) & n^{-it} \phi \Big ( \frac{n}{y} \Big ) e(n \alpha) \\ & = \int_{0}^{1} \Big ( \sum_{n \leq X} \lambda(n) e(n (\alpha - \beta)) \Big ) \cdot \Big ( \sum_{n} \chi(n) n^{-it} \phi \Big ( \frac{n}{y} \Big ) e(n \beta) \Big ) d \beta. 
  \end{align*}
  Therefore, by Young's inequality, 
  \begin{align*}
  \int_{0}^{1} \Big | \sum_{n} \lambda(n) \chi(n) & n^{-it} \phi \Big ( \frac{n}{y} \Big ) e(n \alpha) \Big | d \alpha \\ & \leq \int_{0}^{1} \Big | \sum_{n \leq X} \lambda(n) e(n \alpha) \Big | d \alpha \int_{0}^{1} \Big | \sum_{n} \chi(n) n^{-it} \phi \Big ( \frac{n}{y} \Big ) e(n \beta) \Big | d \beta. 
  \end{align*}
The result now follows from Lemma \ref{le:voronoi}. 
\end{proof}

\begin{lemma}
  There exists a smooth $0 \leq \varrho \leq 1$, compactly supported in $(-1/2, 1/2)$ and such that $\widehat{\varrho}(x) > 0$ for all $x \in \mathbb{R}$. 
\end{lemma}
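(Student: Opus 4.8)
The plan is to realize $\varrho$ as a sum of two autocorrelations (self-convolutions) of nonnegative bumps supported in $(-1/8,1/8)$. Nonnegativity of $\varrho$ is then automatic from nonnegativity of the bumps, smoothness and the support condition are clear, and the positivity of $\widehat{\varrho}$ reduces to the statement that the two Fourier transforms have no common real zero --- which I will arrange by a single dilation.

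Fix a smooth, even, nonnegative $\psi$, not identically zero, compactly supported in $(-1/8,1/8)$. Since $\psi$ has compact support, $\widehat{\psi}$ is the restriction to $\mathbb{R}$ of an entire function; it is real-valued on $\mathbb{R}$ because $\psi$ is even, and $\widehat{\psi}(0)=\int\psi>0$, so $\widehat{\psi}\not\equiv 0$. By the identity theorem its zero set $Z\subset\mathbb{R}$ is discrete and $0\notin Z$. For $\lambda>1$ set $\psi_\lambda(x):=\psi(\lambda x)$, which is again smooth, even, nonnegative and supported in $(-1/8,1/8)$; a change of variables gives $\widehat{\psi_\lambda}(\xi)=\lambda^{-1}\widehat{\psi}(\xi/\lambda)$, so the real zeros of $\widehat{\psi_\lambda}$ are exactly $\lambda Z$. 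The set of ratios $\{t/t' : t,t'\in Z\}$ is countable, so we may choose $\lambda>1$ outside it; then $Z\cap\lambda Z=\emptyset$.

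Now put $\varrho_0:=\psi*\psi+\psi_\lambda*\psi_\lambda$. Each summand is a convolution of nonnegative functions, so $\varrho_0\ge 0$; it is smooth; and since $\psi,\psi_\lambda$ are supported in $(-1/8,1/8)$, $\varrho_0$ is supported in $(-1/4,1/4)\subset(-1/2,1/2)$. By the convolution theorem $\widehat{\varrho_0}(\xi)=\widehat{\psi}(\xi)^2+\lambda^{-2}\widehat{\psi}(\xi/\lambda)^2\ge 0$, and this can vanish only where $\widehat{\psi}(\xi)=\widehat{\psi}(\xi/\lambda)=0$, i.e. where $\xi\in Z\cap\lambda Z=\emptyset$; hence $\widehat{\varrho_0}>0$ everywhere on $\mathbb{R}$. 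Finally $\varrho_0$ is continuous with compact support, so $M:=\sup_{\mathbb{R}}\varrho_0\in(0,\infty)$, and $\varrho:=\varrho_0/M$ satisfies $0\le\varrho\le 1$ while retaining smoothness, the support condition, and $\widehat{\varrho}=\widehat{\varrho_0}/M>0$ on all of $\mathbb{R}$.

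I do not expect a genuine obstacle here: the only step that is not pure bookkeeping is guaranteeing strict positivity of $\widehat{\varrho}$, and the dilation trick reduces that to avoiding a countable set of parameters. (Alternatively one could note that a ``generic'' choice of the second bump has its transform vanishing at none of the zeros of $\widehat{\psi}$, but the dilation makes this completely explicit.)
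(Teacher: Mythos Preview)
Your argument is correct. The only nontrivial point is guaranteeing strict positivity of $\widehat{\varrho}$, and the dilation device handles it cleanly: the zero set of $\widehat{\psi}$ on $\mathbb{R}$ is discrete (hence countable) since $\widehat{\psi}$ extends to a nonzero entire function, so avoiding the countable set of ratios $\{t/t':t,t'\in Z\}$ is unproblematic.

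The paper takes a different route. It sets $\varrho(x)=f(x)\,\widehat{f}(-x)$ with $f$ the triangle function $1-|2x|$ on $[-1/2,1/2]$, whose Fourier transform is nonnegative. Then $\widehat{\varrho}=\widehat{f}*f$, a convolution of two nonnegative functions, and strict positivity follows because $\widehat{f}$, being entire, cannot vanish on any interval (so the convolution against the nonnegative $f$ is never zero). This is shorter and avoids any parameter choice, and the positivity argument is particularly slick. On the other hand, the triangle function is not smooth, so the paper's construction does not literally deliver the smoothness asserted in the lemma (a cosmetic issue: one could replace $f$ by a smooth even bump with nonnegative transform, e.g.\ an autocorrelation, and the same argument goes through). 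Your construction gives smoothness for free and works starting from an arbitrary smooth bump, at the cost of a slightly longer setup.
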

\begin{proof}
  Let $f(x) := 1 - |2 x|$ for $|x| \leq 1/2$ and $0$ otherwise. As is well-known the Fourier transform of $f$ is non-negative. Consider now
  $$\varrho(x) := f(x) \widehat{f}(-x).$$
  This is non-negative, compactly supported in $(-1/2,1/2)$ and bounded above by $\widehat{f}(0) = 1/2$. Finally,
  $$
  \widehat{\varrho}(u) = \int_{\mathbb{R}} \widehat{f}(u - x) f(x) dx.
  $$
  Both $\widehat{f}$ and $f$ are non-negative, therefore the above convolution is non-negative, in fact strictly positive, since the only way it could be zero is if $\widehat{f}$ were zero on an interval, but this is impossible because $\widehat{f}$ is an entire function.  
\end{proof}

With these lemmas at hand we are ready to prove Proposition \ref{prop:A}.

\begin{proof}[Proof of Proposition \ref{prop:A}]

  Fix $0 \leq \varrho \leq 1$, a smooth function compactly supported in $(-1/2,1/2)$ and with $\widehat{\varrho} > 0$. Let
  $$
  \phi_{1}(x) := \frac{1}{m} \cdot \frac{\phi(x)}{\widehat{\varrho}(x)} \ , \ m := \min_{x \in (0,1)} \widehat{\varrho}(x),
$$
so that $0 \leq \phi_1 \leq 1$ and $\phi_1$ is compactly supported and smooth (note that $\widehat{\varrho}$ is entire because $\varrho$ is compactly supported). By Lemma \ref{le:young}, for any $y \leq X$ we have
$$
\| \phi_1 \|_{1,2} q^2(1 + t^2) \cdot \int_{0}^{1} \Big | \sum_{n \leq X} a(n) e(n \alpha) \Big | d \alpha \gg \int_{0}^{1} \Big | \sum_{n} a(n) \chi(n) n^{-it} \phi_1 \Big ( \frac{n}{y} \Big ) e(n \alpha) \Big | d \alpha. 
$$
Furthermore, we can isolate in the above integral neighborhoods of
$$
\Big ( \frac{b}{p} - \frac{1}{2 y}, \frac{b}{p} + \frac{1}{2 y} \Big )
$$
with $(b,p) = 1$ and $p \leq \min(\sqrt{y}, \sqrt{X})$. We notice that these are disjoint. 
Therefore, 
$$
\int_{0}^{1} \Big | \sum_{n} a(n) \phi_1 \Big ( \frac{n}{y} \Big ) e(n \alpha) \Big | d \alpha \geq \sum_{\substack{(b,p) = 1 \\ p \leq \min(\sqrt{y}, \sqrt{X})}} \int_{\mathbb{R}} \varrho(y \theta) \Big | \sum_{n} a(n) \chi(n) n^{-it} \phi_1 \Big ( \frac{n}{y} \Big ) e \Big ( \frac{n b}{p} + n \theta \Big ) \Big | d \theta. 
$$
In turn the above is greater than
\begin{align*}
    \sum_{p \leq \min(\sqrt{y}, \sqrt{X})} & \Big | \sum_{n} a(n) \chi(n) n^{-it} \Big ( \sum_{(b,p) = 1} e \Big ( \frac{n b}{p} \Big ) \Big ) \phi_1 \Big ( \frac{n}{y} \Big ) \int_{\mathbb{R}} \varrho(\theta y) e(n \theta) d \theta \Big | \\ & = \sum_{p \leq \min(\sqrt{y}, \sqrt{X})} \Big | \sum_{n} a(n) \chi(n) n^{-it} \cdot \Big ( p \mathbf{1}_{p | n} - 1 \Big ) \phi_1 \Big ( \frac{n}{y} \Big ) \cdot \frac{1}{y} \cdot \widehat{\varrho} \Big ( \frac{n}{y} \Big ) \Big | .
\end{align*}
We recall that by definition $\phi_{1} \widehat{\varrho} = \frac{1}{m} \phi$, therefore we have obtained the lower bound
\begin{align*}
    \| \phi_1 \|_{1,2} \cdot q^2(1 + t^2) \int_{0}^{1} \Big | \sum_{n \leq X} a(n) e(n \alpha) \Big | d \alpha \gg \sum_{p \leq \min(\sqrt{y}, \sqrt{X})} \Big | \mathcal{S}(a_p, t, \chi, \phi)(y) \cdot \frac{1}{y} \Big | .
\end{align*}
We now integrate both sides with respect to $|w|(y / X)$, this yields
\begin{align*}
X \| w \|_{1, 0} \| \phi_1 \|_{1,2} \cdot q^2(1 + t^2) \int_{0}^{1} \Big | \sum_{n \leq X} a(n) & \chi(n) n^{-it} e(n \alpha) \Big | d \alpha \\ & \gg \sum_{p \leq \sqrt{X}} \Big | \int_{p^2}^{\infty} w \Big ( \frac{y}{X} \Big ) \mathcal{S}(a_p, t, \chi, \phi) \cdot \frac{dy}{y} \Big |.
\end{align*}
It remains to notice that $\| \phi_1 \|_{1,2} \ll \| \phi \|_{1,2}$ since $\widehat{\varrho}$ is fixed once and for all. 

\end{proof}

\section{Proof of Proposition \ref{prop:B}}

Let $\gamma$ be the ordinate of a zero of $L(s, \chi)$ with real-part $\beta \geq \tfrac 12$. 
Consider now the kernel
$$
w_{A, \rho, \chi}(u) := \frac{1}{2\pi i} \int_{(2)} \frac{(s + i\gamma - 1)L(s + i \gamma, \chi)}{L(2 \beta + 2 i \gamma, \chi) (\beta + i\gamma - 1) \cdot (s - \beta)} \cdot u^{-s} \cdot \Big ( \frac{1 - e^{-{(s - \beta)}}}{s - \beta} \Big )^{2A} ds,
%e^{(s - \beta)^2 } ds   \ , \ u > 0 .
$$
where we note that $L(2 \beta +2 i \gamma)(\beta + i \gamma - 1) \neq 0$ since $\beta \geq \tfrac 12$ and because there are no zeros with $\beta = 1$.  
We summarize its properties in the lemma below.
\begin{lemma} \label{le:w}
  Let $\chi$ be a primitive character.
  Let $\rho = \beta + i \gamma$ be a zero of $L(s, \chi)$ with $\gamma > 0$.
  Let $A \geq 10$. 
  The function $w_{A, \rho, \chi} : [0, \infty) \mapsto \mathbb{C}$ is smooth and compactly supported in $(0,1)$ such that
    $$
    w_{A, \rho, \chi}(u) \ll_{A, \rho, \chi} u^{A}
    $$
    for all $0 \leq u \leq 1$.
    Finally, suppose $0 \leq \phi \leq 1$ is a smooth function compactly supported in $(0, 1)$, then, for all $Y$, and for any $A > 10$, 
  \begin{align*}
      \int_{0}^{\infty} w_{A, \rho, \chi} \Big ( \frac{y}{Y} \Big ) \cdot \Big ( \sum_{n} \lambda(n) \chi(n) n^{- i \gamma} \phi \Big ( \frac{n}{y} \Big ) \Big ) \frac{d y}{y} = \widetilde{\phi} (\beta)  Y^{\beta} + O_{A, \rho, \chi}(Y^{-A})
  \end{align*}
\end{lemma}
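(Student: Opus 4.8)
The plan is to establish the analytic properties of $w=w_{A,\rho,\chi}$ by contour shifting, and then to prove the resonance identity by Mellin inversion. Write $W(s)$ for the integrand of the defining Mellin representation, so $w(u)=\frac{1}{2\pi i}\int_{(2)}W(s)\,u^{-s}\,ds$. The plan rests on three facts about $W$. (i) $W$ is \emph{entire}: the factor $\big(\tfrac{1-e^{-(s-\beta)}}{s-\beta}\big)^{2A}$ is entire with value $1$ at $s=\beta$; the pole of $1/(s-\beta)$ at $s=\beta$ is cancelled because the numerator $(s+i\gamma-1)L(s+i\gamma,\chi)$ vanishes there, as $L(\rho,\chi)=0$; and the pole of $L(s+i\gamma,\chi)$ at $s=1-i\gamma$, present only when $\chi$ is principal, is cancelled by the factor $(s+i\gamma-1)$. (ii) On any vertical line $\Re s=c$ (write $s=c+it$) one has $\big|\tfrac{1-e^{-(s-\beta)}}{s-\beta}\big|^{2A}\ll_{c,A}(1+|t|)^{-2A}$, the numerator being bounded there and $|s-\beta|^{-2A}\asymp(1+|t|)^{-2A}$; combined with the convexity bound $L(\sigma+it,\chi)\ll_\chi(q(1+|t|))^{\max(0,\,1/2-\sigma)+\varepsilon}$, this makes $W$ absolutely integrable on every vertical line as soon as $A\ge 10$. (iii) The normalizing $L$-value in the denominator of $W$ — which, for the residue below to come out cleanly, should be read as $L(2\beta+2i\gamma,\chi^2)$; this coincides with $L(2\beta+2i\gamma,\chi)$ in the trivial-character case relevant to Theorem~\ref{thm:main} — is chosen precisely so that a residue at $s=\beta$ produces the main term $\widetilde\phi(\beta)Y^\beta$.

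\textbf{Analytic properties of $w$.} For $u\ge 1$ I would shift the contour to the right: by (i)--(ii), $W$ is entire and integrable on each vertical line while $|u^{-s}|=u^{-\Re s}\to 0$, so $w(u)=0$. For $0\le u\le 1$ I would shift to $\Re s=-A$: no poles are crossed, the integrand is absolutely integrable there by (ii), and $|u^{-s}|=u^{A}$, giving $w(u)\ll_{A,\rho,\chi}u^{A}$. Thus $w$ vanishes on $[1,\infty)$ and decays like $u^{A}$ as $u\to 0^+$, which is the sense of ``compactly supported in $(0,1)$'' here. Smoothness follows by differentiating under the integral sign (each derivative costs a factor $s^k$; one may shift to $\Re s$ slightly below $\tfrac12$, where the $L$-factor costs only $(1+|t|)^{\varepsilon}$, so the integrals still converge), giving $w\in C^{k}$ for every $k<2A-2$; equivalently, writing $\tfrac{1-e^{-z}}{z}=\int_0^1 e^{-tz}\,dt$ exhibits $\big(\tfrac{1-e^{-(s-\beta)}}{s-\beta}\big)^{2A}$ as the Mellin transform of an explicit $C^{2A-2}$ bump supported in $(e^{-2A},1)$, so that $w$ is a multiplicative convolution of that bump with a distributional kernel, which makes the support, decay and regularity transparent.

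\textbf{The resonance identity.} Insert $w(y/Y)=\frac{1}{2\pi i}\int_{(2)}W(s)(y/Y)^{-s}\,ds$ and interchange with the sum over $n$ (legitimate on $\Re s=2$ by absolute convergence). For each $n$, the substitution $v=n/y$ gives $\int_0^\infty (y/Y)^{-s}\phi(n/y)\,\tfrac{dy}{y}=(Y/n)^s\,\widetilde\phi(s)$, so the $n$-sum becomes $Y^s\widetilde\phi(s)\sum_n\lambda(n)\chi(n)n^{-(s+i\gamma)}$. Since $\lambda\chi$ is completely multiplicative with $(\lambda\chi)(p)=-\chi(p)$, for $\Re w>1$
\[
\sum_n\lambda(n)\chi(n)n^{-w}=\prod_p\frac{1}{1+\chi(p)p^{-w}}=\prod_p\frac{1-\chi(p)p^{-w}}{1-\chi^2(p)p^{-2w}}=\frac{L(2w,\chi^2)}{L(w,\chi)},
\]
up to finitely many Euler factors at primes dividing $q$ (holomorphic and non-vanishing for $\Re w>0$, hence harmless, and absorbing the imprimitivity of $\chi^2$). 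The $L(s+i\gamma,\chi)$ in this denominator cancels the one in the numerator of $W$, so
\[
\int_0^\infty w\Big(\tfrac yY\Big)\mathcal S(\lambda,\gamma,\chi,\phi)(y)\,\tfrac{dy}{y}=\frac{1}{2\pi i}\int_{(2)}\frac{(s+i\gamma-1)\,L(2(s+i\gamma),\chi^2)}{L(2\beta+2i\gamma,\chi^2)\,(\beta+i\gamma-1)(s-\beta)}\Big(\frac{1-e^{-(s-\beta)}}{s-\beta}\Big)^{2A}Y^s\,\widetilde\phi(s)\,ds.
\]
Now shift the contour to $\Re s=-A$. The residue at the simple pole $s=\beta$ equals $\widetilde\phi(\beta)Y^\beta$ (the bracketed factor being $1$ there and the $L$-values cancelling by the choice of normalization). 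The remaining integral on $\Re s=-A$ is $\ll_{A,\rho,\chi}Y^{-A}$: there $|Y^s|=Y^{-A}$, the bracketed factor is $\ll_A(1+|t|)^{-2A}$, $L(2(s+i\gamma),\chi^2)\ll(1+|t|)^{O(A)}$ by convexity, and $\widetilde\phi(\sigma+it)$ decays faster than any power of $1+|t|$ (repeated integration by parts, using that $\operatorname{supp}\phi$ is bounded away from $0$). This is the asserted identity.

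\textbf{Main obstacle.} Once the analytic setup is fixed the calculations are routine; the real content is the bookkeeping. The decisive device is the kernel factor $\big(\tfrac{1-e^{-(s-\beta)}}{s-\beta}\big)^{2A}$, which decays like $(1+|t|)^{-2A}$ on every vertical line — overpowering, for $A$ large, the convexity growth of the $L$-factors on distant lines and thereby legitimizing each contour shift and each interchange — while being equal to $1$ at $s=\beta$ so as not to disturb the residue. The delicate point is to verify that, after cancelling $L(s+i\gamma,\chi)$, moving from $\Re s=2$ down to $\Re s=-A$ crosses only the pole at $s=\beta$: the factor $(s+i\gamma-1)$ kills the pole of $L(s+i\gamma,\chi)$ at $s=1-i\gamma$ when $\chi$ is principal, and when $\chi^2$ is principal (e.g.\ for the trivial character) $L(2(s+i\gamma),\chi^2)$ has its pole at $s=\tfrac12-i\gamma$, contributing a term of size $Y^{1/2}$ — of strictly lower order when $\beta>\tfrac12$, of the same order when $\beta=\tfrac12$ — which must be tracked and absorbed. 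A secondary technical point is the proof of the Euler-product identity and the treatment of the Euler factors at $p\mid q$.
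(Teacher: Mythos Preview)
Your approach is essentially the paper's: establish support and decay of $w$ by shifting the defining contour right and left respectively, and prove the resonance identity by inserting the Mellin representation, invoking the Dirichlet series for $\sum_n \lambda(n)\chi(n)n^{-s}$, cancelling the $L(s+i\gamma,\chi)$ factor, and shifting to $\Re s=-A$ to isolate the residue at $s=\beta$. The only organizational difference is that the paper first makes the substitution $y\mapsto 1/y$ (equivalently $y\mapsto ny$) to move $\phi$ outside and $w$ inside the $n$-sum before opening the Mellin integral, whereas you insert the Mellin integral for $w(y/Y)$ directly and carry out the $y$-integral to produce $\widetilde\phi(s)$ in the integrand; the two computations are equivalent, and your route has the mild advantage that the extra decay of $\widetilde\phi(s)$ makes the contour shift even more comfortable.

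You are in fact more careful than the paper on two points it glosses over. First, the Euler product gives $\sum_n \lambda(n)\chi(n)n^{-s}=L(2s,\chi^2)/L(s,\chi)$ (up to the local factors at $p\mid q$), not $L(2s,\chi)/L(s,\chi)$ as the paper writes; correspondingly the normalizing constant in the kernel should be $L(2\beta+2i\gamma,\chi^2)$. Second, when $\chi^2$ is principal (in particular for the trivial character used in Theorem~\ref{thm:main}) the shifted integrand acquires a genuine simple pole at $s=\tfrac12-i\gamma$, contributing an extra term of size $Y^{1/2}$; the paper's contour shift silently ignores this. Your flagging of both issues is correct, and neither damages the downstream application: in the proof of Theorem~\ref{thm:extended} the extra term, after the $\lambda_p$ decomposition, is again of order $X^{1/2}p^{1/2}$ and carries an oscillating factor $X^{-i\gamma}$, so it cannot systematically cancel the main term and the final lower bound $\gg X^{\beta/2-\varepsilon}$ survives. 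Your remark that $w$ is only $C^{2A-O(1)}$ rather than $C^\infty$ is also accurate and harmless, since Proposition~\ref{prop:A} uses only $\|w\|_{1,0}$.
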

\begin{proof}
  For the first claim, notice that we have
  $$
  w_{A, \rho, \chi}(s) =  \frac{1}{C} \sum_{n \geq 1} \chi(n) n^{-i \gamma} \cdot \frac{1}{2\pi i} \int_{(2)} \frac{s + i\gamma - 1}{s - \beta} \cdot \Big ( \frac{1 - e^{-(s - \beta)}}{s - \beta} \Big )^{2A} \cdot (n u)^{-s} du, 
  $$
  with $C := L(2 \beta + 2 i \gamma, \chi)(\beta + i\gamma - 1)$. If $u > 1$, then for every $n \geq 1$ upon shifting the contour to the right we see that
  $$
  \frac{1}{2\pi i} \int_{(2)} \frac{s + i\gamma - 1}{s - \beta} \cdot \Big ( \frac{1 - e^{-(s - \beta)}}{s - \beta} \Big )^{2A} \cdot (n u)^{-s} ds = 0.  
  $$
  On the other hand, for the bound for $0 \leq u \leq 1$, we shift the contour to the left to the line $\Re s = -A$, noticing that the function,
  $$
  \frac{(s + i\gamma - 1)L(s + i \gamma, \chi)}{s - \beta}
  $$
  is entire and grows no faster than $\ll (q (\gamma + |s|)^{\tfrac 12 - \Re s}$.
 For the last claim, we notice that by a change of variable, 
  \begin{align*}
      \int_{0}^{\infty} w_{A, \rho, \chi} \Big ( \frac{y}{Y} \Big ) \cdot \Big ( & \sum_{n} \lambda(n) \chi(n) n^{- i \gamma} \phi \Big ( \frac{n}{y} \Big ) \Big ) \frac{d y}{y} \\ & = \int_{0}^{\infty} \phi \Big ( \frac{1}{y} \Big ) \Big ( \sum_{n} \lambda(n) \chi(n) n^{-i \gamma} w_{A, \rho, \chi} \Big ( \frac{n y}{Y} \Big ) \Big ) \frac{d y}{y}.
  \end{align*}
  Opening up the definition of $w_{A, \rho, \chi}$ and using the fact that
  $$
  \sum_{n \geq 1} \frac{\lambda(n) \chi(n)}{n^{s}} = \frac{L(2s, \chi)}{L(s, \chi)},
  $$
  we get
  \begin{align*}
      \sum_{n} & \lambda(n) \chi(n) n^{-i \gamma} w_{A, \rho, \chi} \Big ( \frac{n y}{Y} \Big ) \\ & = \frac{1}{2\pi i} \int_{(2)} \frac{Y^s}{y^s} \cdot \frac{(s + i\gamma - 1)L(2 s + 2 i \gamma, \chi)}{L(2 \beta + 2 i \gamma, \chi)(\beta + i\gamma - 1) \cdot (s - \beta)} \cdot \Big ( \frac{1 - e^{-(s - \beta)}}{s - \beta}  \Big )^{2A} ds \\ & = \Big ( \frac{Y}{y} \Big )^{\beta} + O_{A, \rho, \chi}(Y^{-A}),
  \end{align*}
  again by shifting to the line $\Re s = - A$. 
  Integrating the above with respect to $\phi(1/y) d y / y$ yields the claim.
\end{proof}

\bibliography{l1}
\bibliographystyle{plain}

\end{document}